\documentclass{amsart}

\usepackage{amsmath}
\usepackage{amssymb}
\usepackage{xcolor}
\usepackage{graphicx}
\usepackage{placeins}
\usepackage{hyperref}
\hypersetup{hidelinks}

\newtheorem{theorem}{Theorem}
\newtheorem{corollary}[theorem]{Corollary}
\theoremstyle{definition}
\newtheorem{remark}[theorem]{Remark}

\newcommand{\Z}{\mathbb Z}
\newcommand{\Q}{\mathbb Q}
\newcommand{\M}{\operatorname{Mod}}
\newcommand{\bdry}{\partial}
\newcommand{\DT}[1]{t_{#1}}
\allowdisplaybreaks[4]

\begin{document}

\title[Handlebody monodromy]
{Lefschetz fibrations with handlebody monodromy}

\author[R. \.{I}. Baykur]{R. \.{I}nan\c{c} Baykur}
\address{Department of Mathematics and Statistics, University of Massachusetts, Amherst, MA 01003, USA}
\email{inanc.baykur@umass.edu}

\author[S. Hirose]{Susumu Hirose}
\address{Department of Mathematics, Tokyo University of Science, Noda, Chiba 278--8510, Japan}
\email{hirose\_susumu@ma.noda.tus.ac.jp}


\begin{abstract}
We study relatively minimal Lefschetz fibrations over closed oriented
surfaces whose monodromy extends over a fixed 3--dimensional handlebody.
If $X$ is the total space and $\ell$ is the number of critical points,
we prove that $\sigma(X)=-\ell$.  Such fibrations with $\ell>0$ exist
precisely for fiber genus $g\geq3$ and base genus $h\geq2$, and one can
take $\ell=1$ for every such pair.  We also show that hyperelliptic handlebody monodromy forces the
fibration to be a surface bundle and that every meridian twist, separating
or nonseparating, has commutator length two in the handlebody group for
$g\geq3$.
\end{abstract}

\maketitle

\section{Introduction}

Let $V_g$ be a genus-$g$ handlebody, $g\geq0$, obtained from a 3--ball
by attaching $g$ 1--handles, with oriented boundary $\bdry V_g=\Sigma_g$.
We regard
its mapping class group $\M(V_g)$ as the subgroup of $\M(\Sigma_g)$ consisting
of mapping classes which extend over $V_g$.  An essential simple closed curve on
$\Sigma_g$ which bounds a properly embedded disk in $V_g$ is called a
\emph{meridian}, and a Dehn twist about such a curve is called a
\emph{meridian twist}.  We refer to \cite{Hensel2020} for background on
handlebody groups.  We use right-handed Dehn twists, write products in
functional order, and set $[x,y]=xyx^{-1}y^{-1}$.  Thus
$\varphi\DT{c}\varphi^{-1}=\DT{\varphi(c)}$.

A \emph{Lefschetz fibration} is a smooth map from an oriented 4--manifold
to an oriented surface, a submersion except at finitely many critical points,
near which it is given by $(z_1,z_2)\mapsto z_1z_2$ in orientation-preserving
complex coordinates. Our fibrations have closed, connected fiber and base
surfaces and are relatively minimal, that is, no fiber contains an exceptional
sphere.  We allow the critical set to be empty; such a fibration
is a \emph{surface bundle} (over a surface).  A Lefschetz fibration $f\colon X\to\Sigma_h$ has
\emph{handlebody monodromy} if, after identifying a regular fiber with
$\Sigma_g$, its monodromy lies in $\M(V_g)$.  Equivalently, it admits a
monodromy factorization
\begin{equation}\label{eq:monodromy-factorization}
 \DT{c_1}\cdots \DT{c_\ell}\,
 [A_1,B_1]\cdots[A_h,B_h]=1
 \qquad\text{in }\M(V_g).
\end{equation}
Every vanishing cycle $c_i$ is essential by relative minimality, and hence is
a meridian by McCullough's theorem \cite{McCullough2006}.

Our main result determines the possible fiber and base genera for Lefschetz
fibrations with critical points and handlebody monodromy, and shows that their
signatures are given by a simple closed formula.

\begin{theorem}\label{thm:main}
Let $f\colon X\to\Sigma_h$ be a genus-$g$ Lefschetz fibration with
handlebody monodromy and $\ell$ critical points.  Then
\begin{equation}\label{eq:signature-main}
 \sigma(X)=-\ell.
\end{equation}
Moreover, for fixed $g$ and $h$, such a fibration exists for some
$\ell>0$ if and only if $g\geq3$ and $h\geq2$.
For every such pair, one can take $\ell=1$.
\end{theorem}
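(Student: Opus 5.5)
The plan has two halves: a signature computation via the Meyer cocycle, and an existence statement via explicit factorizations plus obstructions in small genus.

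\emph{Signature formula.} I will use the local signature formula of Endo and of Meyer: $\sigma(X)=\sum_{i}\sigma_{\mathrm{loc}}(c_i)$ plus a global term given by the Meyer cocycle of the monodromy relation. The key point is that $\M(V_g)$ acts on $H_1(\Sigma_g;\Z)$ preserving the Lagrangian $L=\ker(H_1(\Sigma_g)\to H_1(V_g))$. Hence the symplectic representation lands in the parabolic subgroup $P_L\subset Sp(2g,\Z)$ stabilizing $L$. On $P_L$ the Meyer cocycle is a coboundary, and I will make this explicit. Relative to a basis adapted to $L$, every element of $P_L$ is block upper triangular, and its Meyer signature reduces to Maslov indices of triples of Lagrangians of the form $(L,\gamma L, L)$ with $\gamma L=L$. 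These Maslov indices vanish.

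More concretely, I will compute $\sigma(X)$ through the Wall/Novikov decomposition. Write $X=N\cup E$, where $N$ is a neighborhood of the singular fibers and $E$ is the surface bundle over the base minus disks. The bundle part over a punctured surface contributes the Meyer signature, which is zero because every monodromy preserves $L$; the Wall non-additivity terms also vanish, since all the relevant Lagrangians coincide with $L$. Each singular fiber neighborhood contributes $-1$ for a nonseparating vanishing cycle (the standard Lefschetz singularity $0$ plus correction). For a separating meridian I will have to check directly that the contribution is also $-1$, rather than the usual $\sigma_{\mathrm{loc}}$ of a separating curve of genus $k$, $-1+4k(g-k)/(2g+1)$. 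The point is that the rational correction term comes from the Meyer function, which is cohomologically trivialized on $P_L$. I expect this bookkeeping, namely choosing the correct splitting so that the separating case also gives exactly $-1$, to be the main obstacle. My first attempt will be to argue via $\sigma(X)=\sigma(N)+\sigma(E)$ with Novikov additivity, since the vertical boundaries are glued along closed 3-manifolds. The signature of $E$ is a Meyer-type term computed entirely inside $P_L$, and it vanishes because the intersection forms on the relevant $H_2$ come from $\ker$ classes, all isotropic with respect to $L$.

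\emph{Nonexistence.} For $g\le 2$, every Lefschetz fibration is hyperelliptic, and I will use the paper's hyperelliptic statement that such handlebody monodromy forces $\ell=0$. Alternatively, for $g\le2$ I can combine the formula $\sigma=-\ell$ with known local signature formulas ($\sigma=-\tfrac35 n+\dots$ for $g=2$), which yield a contradiction for $\ell>0$. For $h=0$, the product of positive meridian twists would equal $1$ in $\M(\Sigma_g)$; this is impossible for a nonempty product, since the signature would have to be $-\ell$ while such fibrations over the sphere have a different signature and Euler characteristic. Equivalently, the monodromy would be trivial in $\M(V_g)$, which carries a nontrivial homomorphism argument. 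For $h=1$, I will show that a single commutator cannot absorb positive meridian twists. The $\Q$-valued quasi-morphism coming from the trivialization on $P_L$, together with $\sigma(X)=-\ell$ and $\sigma(X)\ge -\ldots$, rules it out: for $h=1$ the known bound $|\sigma|\le$ something small fails, via the Endo–Korkmaz result that a Dehn twist product is not a single commutator.

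\emph{Existence.} I will use the commutator-length result: every meridian twist is a product of two commutators in $\M(V_g)$ for $g\ge3$. This gives $\DT{c}=[A_1,B_1][A_2,B_2]$, hence $\DT{c}^{-1}[A_1,B_1][A_2,B_2]$-type relations. After inverting, I obtain $\DT{c}[B_2,A_2][B_1,A_1]=1$, which is a factorization with $\ell=1$ and $h=2$. Adding trivial commutators $[1,1]$ raises $h$ to any $h\ge2$.
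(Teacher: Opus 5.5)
\textbf{Signature formula.} The argument rests on a false step. You are right that Meyer's cocycle restricted to the stabilizer $P_L$ of $L=\ker\bigl(H_1(\Sigma_g)\to H_1(V_g)\bigr)$ is a coboundary. It is not zero there, however: for the transvection $T$ induced by a nonseparating meridian twist, already $\tau_g(T,T)\neq0$. A trivialization $\tau_g=\delta\phi$ does not make the signature of the bundle $E$ over the punctured base vanish. It telescopes it into boundary terms, $\sigma(E)=-\sum_i\phi(\DT{c_i})$. For instance, the paper's fibration over $\Sigma_2$ with one nonseparating vanishing cycle has $\sigma(X)=-1$ and $\sigma(N)=0$, so $\sigma(E)=-1$.

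Your bookkeeping is also internally inconsistent. Since $\sigma(N_c)=0$ for nonseparating $c$ and $-1$ for separating $c$, the decomposition $\sigma(X)=\sigma(N)+\sigma(E)$ with $\sigma(E)=0$ would give $-s$, not $-\ell$. The ``correction'' you attach to a nonseparating singular fiber is exactly the $E$-contribution you declared to be zero.

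What is missing is the specific trivializing function and its values on meridian twists. Kuno--Sato's $\phi_g^V(\varphi)=\sigma(M_\varphi)$, the signature of the $V_g$--mapping torus, cobounds $\tau_g$ on $\M(V_g)$. By a homological computation it equals $1$ on nonseparating and $0$ on separating meridian twists. Extend $E$ to a $V_g$--bundle $W$ and apply Novikov additivity to $\bdry W$; this gives $\sigma(E)=-n$, hence $\sigma(X)=-n-s=-\ell$. This is the step you defer as ``the main obstacle,'' and it is the actual content of the formula.

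\textbf{Nonexistence and existence.} Your $g\le2$ case is fine. The hyperelliptic corollary is proved independently via the homomorphism $\lambda_g=\phi_g^H-\phi_g^V$, and the local signature formulas would also work once $\sigma=-\ell$ is known.

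For $h=1$, the fact you cite is false for products of more than one twist. A positive product of Dehn twists can be a single commutator in $\M(\Sigma_g)$; it can even equal $1=[1,1]$, as for any Lefschetz fibration over $S^2$. What is needed is Li's work. Since the base has positive genus, $X$ is minimal; also $e(X)=\ell$. Hence $c_1^2(X)=2e+3\sigma=-\ell<0$, contradicting Li's symplectic Parshin--Arakelov inequality $c_1^2\geq0$.

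For $h=0$, no homomorphism argument can work, because meridian twists lie in the commutator subgroup of $\M(V_g)$ for $g\geq3$. The paper uses Li's Proposition~4.1 instead: $\sigma=-\ell$ over $S^2$ would make $X$ a blow-up of an $S^2$--bundle over $\Sigma_g$, contradicting his genus bound.

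Your existence argument is circular. That a meridian twist is a product of two commutators in $\M(V_g)$ is precisely what must be constructed here, and the paper derives that corollary from this construction. The missing idea is a handlebody lantern:
\begin{itemize}
\item Place the lantern on the boundary of a $3$--ball cut off from $V_g$ by four neck disks. Then all seven curves are meridians and $\DT{a}\DT{b}\DT{c}\DT{d}=\DT{x}\DT{y}\DT{z}$ in $\M(V_g)$.
\item Cutting along $(D_b,D_x)$ or $(D_y,D_c)$ leaves a connected genus-$(g-2)$ handlebody, and $D_z,D_d$ are nonseparating. So there are $\varphi,\psi\in\M(V_g)$ with $\varphi(D_b)=D_y$, $\varphi(D_x)=D_c$, $\psi(D_z)=D_d$.
\item This gives $\DT{a}=[\DT{x}\DT{b}^{-1},\varphi][\DT{z},\psi]$.
\end{itemize}
The Korkmaz--Ozbagci trick in $\M(\Sigma_g)$ alone does not suffice, because the curves must be meridians and the conjugating maps must extend over $V_g$.
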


When $\ell=0$, Theorem~\ref{thm:main} recovers the vanishing of the
signature for surface bundles whose monodromy extends over a handlebody.
This case also follows from \cite[Theorem~A]{GiansiracusaTillmann2011}
for $g\geq2$ and from \cite[Lemma~4.2]{KunoSato2020} for $g\geq1$.
Such surface bundles are nevertheless abundant. For example, when
$g,h\geq1$, one may send one standard generator of the base fundamental
group to a meridian twist and all the others to the identity.

The signature formula also gives an obstruction to holomorphicity.

\begin{corollary}\label{cor:nonholomorphic}
A Lefschetz fibration with handlebody monodromy and $\ell>0$ cannot be
 holomorphic or a fiber sum of holomorphic Lefschetz
fibrations.
\end{corollary}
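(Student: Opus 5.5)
The corollary follows from the signature formula $\sigma(X)=-\ell$ of Theorem~\ref{thm:main}, combined with a known lower bound on the signature of holomorphic Lefschetz fibrations in terms of the number of singular fibers. The cleanest input I would use is the local signature theory for holomorphic (and more generally, fiber sums of holomorphic) Lefschetz fibrations. For a relatively minimal genus-$g$ Lefschetz fibration whose monodromy factorization has $n$ nonseparating and $s$ separating vanishing cycles, the signature is a sum of local contributions from the singular fibers plus a global term given by the Meyer cocycle, which vanishes for bundles over a disk when the factorization is assembled from those of the pieces. In the holomorphic case, one has the Xiao/Cornalba--Harris slope inequality
\[
K_f^2 \geq \frac{4(g-1)}{g}\,\chi_f
\]
for relatively minimal non-isotrivial fibrations, with $\chi_f\ge 0$. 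Rewriting $\sigma=K_f^2-8\chi_f$ and $e_f=12\chi_f-K_f^2=\ell$ (here $e_f=e(X)-4(g-1)(h-1)$ counts critical points), this gives a lower bound of the form $\sigma(X)\geq -\frac{g+1}{g}\,\ell+(\text{nonnegative})$. That is not directly contradictory with $\sigma=-\ell$, so I would instead use the sharper structure of holomorphic fibrations.

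The plan is as follows. First, handle the isotrivial or trivial cases: a holomorphic fibration with $\ell>0$ is not a smooth bundle. Then I would invoke the positivity $\chi_f>0$ for non-isotrivial fibrations, or $K_f^2\ge 0$ and $\chi_f\ge0$ in general. From $\sigma=K_f^2-8\chi_f=-\ell$ and $\ell=12\chi_f-K_f^2$ we get $K_f^2-8\chi_f=K_f^2-12\chi_f$, hence $\chi_f=0$. Then $\ell=-K_f^2\le 0$ by $K_f^2\geq 0$ (Arakelov positivity of $\omega_{X/B}$ for relatively minimal fibrations with $g\geq2$), contradicting $\ell>0$. Here $\chi_f=\chi(\mathcal O_X)-(g-1)(h-1)$ and $K_f^2=K_X^2-8(g-1)(h-1)$ are additive invariants. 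So the key computation is just that $\sigma=-\ell$ forces $\chi_f=0$ and $K_f^2=-\ell<0$.

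For fiber sums, the same relative invariants are additive under fiber sum. Indeed, $e_f$ and $\sigma$ are additive by Novikov additivity, with the fiber-sum gluing region contributing zero, and therefore so are $\chi_f=(\sigma+e_f)/4$ and $K_f^2=2e_f+3\sigma$. Each holomorphic summand has $K_f^2\ge0$, so the fiber sum has $K_f^2\ge 0$. But $\sigma=-\ell$ gives $K_f^2=2\ell-3\ell=-\ell<0$, a contradiction.

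The main obstacle is to justify $K_f^2\geq0$ for every holomorphic summand, including low genus ($g\le 1$) and base genus $0$, using the standard Arakelov/Xiao results. These genera are harmless, because Theorem~\ref{thm:main} requires $g\ge3$ when $\ell>0$. The fiber sum also has $g\ge 3$, so every summand has the same $g\ge3$. Summands may have arbitrary base genus, and the inequality $K_f^2\ge 0$ holds for relatively minimal holomorphic fibrations with $g\geq 2$ over any base. One must also check that the summands are relatively minimal; an exceptional sphere in a summand's fiber would persist in the sum, so this holds.
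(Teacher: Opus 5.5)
Your proposal is correct. For a single holomorphic fibration it is the paper's argument: $\sigma(X)=-\ell$ gives $K_f^2=2e_f+3\sigma=-\ell$, i.e.\ $c_1^2(X)=8(g-1)(h-1)-\ell$, which contradicts Arakelov's inequality $K_{X/B}^2\geq 0$. Your detour through $\chi_f=0$ is harmless but unnecessary, as your own later computation shows.

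You depart from the paper in the fiber-sum case. The paper first notes that each summand inherits handlebody monodromy, since its monodromy is the restriction of that of $f$ to part of the base, and that each summand is relatively minimal. The holomorphic case then forces every holomorphic summand to be a surface bundle. Hence the sum is a surface bundle too, contradicting $\ell>0$.

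You instead use that $e_f=\ell$ and $\sigma$ are additive under fiber sum, by Novikov additivity. Hence $K_f^2=2e_f+3\sigma$ is additive, and each holomorphic summand has $K_f^2\geq 0$. Your route needs nothing about the summands' monodromy, only the numerical identity $\sigma=-\ell$ for $f$ itself. It also proves more: no relatively minimal Lefschetz fibration of genus $g\geq 2$ with $\sigma<-\tfrac23\ell$ is a fiber sum of holomorphic ones. The paper's route is shorter once the holomorphic case is done, but it relies on the (easy) inheritance of the handlebody property by summands.

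Your exploratory first paragraph contains a miscalculation. The slope inequality $K_f^2\geq\frac{4(g-1)}{g}\chi_f$, together with $K_f^2=12\chi_f-\ell$ and $\sigma=4\chi_f-\ell$, gives $\sigma\geq-\frac{g+1}{2g+1}\ell$, not $-\frac{g+1}{g}\ell$. So Xiao's inequality does contradict $\sigma=-\ell$ directly. Since both $\sigma$ and $\ell$ are additive, it does so for fiber sums as well.
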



Next, for $g\geq1$, fix a hyperelliptic involution $\iota$ on $\Sigma_g$
which extends to an involution of $V_g$.  Let $H(\Sigma_g)$ be its
centralizer in $\M(\Sigma_g)$, and set
\[
 \mathcal H(V_g):=H(\Sigma_g)\cap\M(V_g).
\]
This is the \emph{hyperelliptic handlebody group}
\cite{HiroseKin2017,KunoSato2020}.  A Lefschetz fibration has
\emph{hyperelliptic handlebody monodromy} if its monodromy lies in
$\mathcal H(V_g)$ after identifying a regular fiber with $\Sigma_g$.

\begin{corollary}\label{cor:hyperelliptic}
Every Lefschetz fibration with hyperelliptic handlebody monodromy is a surface
bundle.
\end{corollary}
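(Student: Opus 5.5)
The plan is to compare the signature formula \eqref{eq:signature-main} of Theorem~\ref{thm:main} with Endo's local signature formula for hyperelliptic Lefschetz fibrations. The two should be compatible only when there are no critical points.

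First dispose of low genus. If $g\leq 2$, Theorem~\ref{thm:main} already says there is no fibration with handlebody monodromy and $\ell>0$. Hyperelliptic handlebody monodromy is in particular handlebody monodromy, so $\ell=0$ and the fibration is a surface bundle. Hence assume $g\geq3$.

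Next recall Endo's formula. On the hyperelliptic mapping class group $H(\Sigma_g)$, the Meyer signature cocycle is rationally a coboundary, because $H^2(H(\Sigma_g);\Q)=0$. Write it as the coboundary of a unique rational function $\phi$. Then for any Lefschetz fibration over any closed base surface $\Sigma_h$ whose monodromy lies in $H(\Sigma_g)$, the signature is a sum of local contributions, one per critical point. In the Meyer-cocycle computation of $\sigma(X)$ from a factorization \eqref{eq:monodromy-factorization}, the commutator terms $[A_i,B_i]$ contribute nothing once the cocycle is replaced by $\delta\phi$; only the twists remain. This uses only that the vanishing cycles are symmetric curves, i.e.\ invariant under $\iota$, which holds since each Dehn twist $\DT{c_i}$ lies in $H(\Sigma_g)$.

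The resulting formula is
\[
 \sigma(X)= -\frac{g+1}{2g+1}\,s_0+\sum_{k=1}^{\lfloor g/2\rfloor}\Bigl(\frac{4k(g-k)}{2g+1}-1\Bigr)s_k .
\]
Here $s_0$ is the number of nonseparating vanishing cycles, and $s_k$ is the number of separating vanishing cycles cutting off a genus-$k$ subsurface. Relative minimality rules out null-homotopic vanishing cycles, so $k\geq1$ and $\ell=s_0+\sum_k s_k$.

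Now impose $\sigma(X)=-\ell$ from Theorem~\ref{thm:main}. Subtracting the two expressions gives
\[
 \frac{g}{2g+1}\,s_0+\sum_{k\geq1}\frac{4k(g-k)}{2g+1}\,s_k=0 .
\]
Every coefficient is strictly positive, since $1\leq k\leq g/2<g$. The $s$'s are nonnegative integers, so all of them vanish. Therefore $\ell=0$ and $f$ is a surface bundle.

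The main obstacle is justifying Endo's formula over a base of positive genus, since it is usually stated for fibrations over $S^2$. The key point is that the commutator terms contribute zero. This follows because, on $H(\Sigma_g)$, the Meyer cocycle is the coboundary $\delta\phi$. The contribution of a product of commutators is then a telescoping sum of $\phi$-values, which vanishes. This is the rational vanishing of the signature of hyperelliptic surface bundles, and it gives additivity of local signatures for arbitrary base genus.
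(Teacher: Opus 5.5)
Your core computation is correct, and it is essentially the paper's argument in geometric form. Subtracting your two expressions for $\sigma(X)$ cancels the $-s$ coming from the separating singular-fiber neighborhoods. What remains is exactly $\sum_i\lambda_g(\DT{c_i})=0$, where $\lambda_g=\phi_g^H-\phi_g^V$. The paper reaches the same identity without any 4--manifold: both functions cobound $\tau_g$ on $\mathcal H(V_g)$, so $\lambda_g$ is a homomorphism there, and the paper applies it directly to the monodromy factorization. Your telescoping argument for extending Endo's formula to base genus $h$ is this same coboundary fact in another form.

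The one flaw is your low-genus step. In the paper, the \emph{only if $g\geq3$} part of Theorem~\ref{thm:main} is deduced from this very corollary. The paper notes that $\mathcal H(V_g)=\M(V_g)$ for $g=1,2$, so Corollary~\ref{cor:hyperelliptic} rules out critical points in those genera. Invoking the theorem to dispose of $g\leq2$ is therefore circular.

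The step is also unnecessary, since nothing in your computation uses $g\geq3$. For $g=1$ there are no essential separating curves, and the coefficient is $g/(2g+1)=1/3>0$. For $g=2$ the coefficients are $2/5$ and $4/5$. Endo's formula is valid in these genera, where $H(\Sigma_g)=\M(\Sigma_g)$ and it reduces to the known genus-one value $-2/3$ and to Matsumoto's genus-two formula. Delete the first step and run your argument for all $g\geq1$.
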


We also obtain the following group-theoretic consequence.

\begin{corollary}\label{cor:commutator-length}
For $g\geq3$, every meridian twist, separating or nonseparating, has
commutator length two in $\M(V_g)$.
\end{corollary}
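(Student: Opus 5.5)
The statement to prove is Corollary~\ref{cor:commutator-length}: for $g\geq3$, every meridian twist has commutator length two in $\M(V_g)$. The plan is to read commutator length off Theorem~\ref{thm:main}, via the dictionary between monodromy factorizations \eqref{eq:monodromy-factorization} and Lefschetz fibrations.

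\textbf{Upper bound.} Fix a meridian twist $\DT{c}$. By Theorem~\ref{thm:main} with $h=2$ and $\ell=1$, there is a relation $\DT{c_1}[A_1,B_1][A_2,B_2]=1$ in $\M(V_g)$ for some meridian $c_1$. This gives $\DT{c_1}^{-1}=[A_1,B_1][A_2,B_2]$.

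Inverting a commutator gives a commutator, since $[x,y]^{-1}=[y,x]$. Hence $\DT{c_1}$ is a product of two commutators.

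The vanishing cycle $c_1$ in the construction behind Theorem~\ref{thm:main} need not be of the same type as $c$. So we must arrange one construction with $c_1$ nonseparating and one with $c_1$ separating. If the existence proof only produces one type, we rework it with the other type of meridian; this is the step I expect to be the real work.

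Once the types match, there is $\varphi\in\M(V_g)$ with $\varphi(c_1)=c$. This holds because $\M(V_g)$ acts transitively on nonseparating meridians, and on separating meridians cutting off a given genus. Conjugating, $\DT{c}=\varphi\DT{c_1}\varphi^{-1}$ is again a product of two commutators in $\M(V_g)$.

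For separating meridians, the genus of the split also matters, so we need a separating example for every splitting $g=g_1+g_2$ that occurs. My first attempt would be to take the example for one splitting and modify it by conjugation or stabilization. Failing that, we rerun the existence construction separately for each genus splitting.

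\textbf{Lower bound.} Suppose $\DT{c}$ were a single commutator, or trivial: $\DT{c}=[A,B]$ in $\M(V_g)$. Then $\DT{c}^{-1}[A,B]=1$ is a relation with commutators in $\M(V_g)$. We want a positive factorization. Rewriting gives $\DT{c}\,[B,A]=1$, using $[A,B]^{-1}=[B,A]$.

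This is a monodromy factorization \eqref{eq:monodromy-factorization} with $\ell=1$ and $h=1$. Since $c$ is an essential meridian, it defines a relatively minimal genus-$g$ Lefschetz fibration over $\Sigma_1$ with handlebody monodromy.

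That contradicts the ``only if'' part of Theorem~\ref{thm:main}, which requires $h\geq2$. The same argument with $h=0$ rules out $\DT{c}=1$, although this is also clear directly because Dehn twists about essential curves have infinite order.

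Hence the commutator length of $\DT{c}$ is exactly two.
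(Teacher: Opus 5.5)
Your lower bound is the paper's argument. Your upper bound for nonseparating meridians is also fine, once you note that the relation \eqref{eq:one-node-factorization} built in the proof of Theorem~\ref{thm:main} has a nonseparating vanishing cycle $a$.

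The genuine gap is the separating case of the upper bound. You flag it as ``the real work'' but never carry it out. The statement of Theorem~\ref{thm:main} gives no control over the type of the vanishing cycle, so it cannot be used as a black box here. Neither of your fallbacks can change the type. Conjugation in $\M(V_g)$ preserves whether a meridian separates, and also its splitting genera. The obvious stabilization, a boundary connected sum with another handlebody extending everything by the identity, keeps $a$ nonseparating. As written, your argument proves the corollary only for nonseparating meridian twists.

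What is missing is a relation $\DT{a}[\psi,\DT{z}][\varphi,\DT{x}\DT{b}^{-1}]=1$ in $\M(V_g)$ with $a$ a separating meridian of each type $k$, $1\leq k\leq\lfloor g/2\rfloor$. The paper obtains it from a second lantern embedding, Figure~\ref{fig:lantern}(b). There the four-holed sphere lies on a 3--ball attached through $D_a$ to a genus-$k$ handlebody, and through $D_b,D_c,D_d$ to a single genus-$(g-k-2)$ handlebody. The bounds $g\geq3$ and $k\leq\lfloor g/2\rfloor$ guarantee $g-k-2\geq0$.

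In this embedding all seven lantern curves bound disks, and $a$ has type $k$. One then checks the conditions the disk-system argument needs:
\begin{itemize}
\item Cutting along $D_b\cup D_x$, or along $D_y\cup D_c$, leaves a connected genus-$(g-2)$ handlebody. This holds because the two pieces of the cut ball stay attached to the genus-$(g-k-2)$ handlebody through the two uncut necks among $b,c,d$.
\item $D_d$ and $D_z$ are nonseparating.
\end{itemize}
This yields $\varphi$ with $\varphi(D_b)=D_y$ and $\varphi(D_x)=D_c$, and $\psi$ with $\psi(D_z)=D_d$. The lantern relation then gives $\DT{a}=[\DT{x}\DT{b}^{-1},\varphi][\DT{z},\psi]$. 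Conjugacy of separating meridians of the same type finishes the proof.
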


Our proofs combine the Meyer functions of Kuno--Sato and Endo
\cite{KunoSato2020,Endo2000}, Li's constraints on Lefschetz fibrations
\cite{Li2000}, and a handlebody version of the lantern construction of
Korkmaz--Ozbagci in \cite{KorkmazOzbagci2001}.

\medskip
\noindent\textit{Acknowledgements.}
The first author was partially supported by NSF grant DMS-2506431 and a Simons
Foundation Travel Grant. The second author was supported by JSPS KAKENHI Grant Number JP24K06746. This work was initiated during the first author's
visit to Japan, supported by JSPS KAKENHI Grant JP22K18267.
LLM tools were used for editing and proofreading; the authors take full responsibility for the mathematical arguments and results.

\section{The signature formula}

First suppose $\ell=0$. The relation in~\eqref{eq:monodromy-factorization}
allows us to construct an oriented $V_g$--bundle $W\to\Sigma_h$.
Its boundary is a surface bundle $Y\to\Sigma_h$ with the same monodromy
as $f$, and $\sigma(Y)=0$ since $Y=\bdry W$.
For $g\geq2$, surface bundles are determined by their monodromy
\cite{EarleEells1969}, so $X$ and $Y$ are isomorphic as bundles.
For $g=0,1$, their restrictions to the complement of a disk in the base
are still isomorphic, since this complement retracts to a graph.
Thus $X$ is obtained from $Y$ by removing a fiber neighborhood
$\Sigma_g\times D^2$ and regluing it by a fiber-preserving diffeomorphism
of its boundary. For $g=1$, this is a logarithmic transform of
multiplicity one; for $g=0$, the possible nontrivial regluing is a
Gluck twist \cite{Gluck1962}.  Since the pieces being glued are unchanged, Novikov
additivity gives $\sigma(X)=\sigma(Y)=0$ in either case.

For the rest of the proof, assume $\ell>0$. Relative minimality then
implies $g\geq1$, since every simple closed curve on $S^2$ is
nullhomotopic. Let
$\rho\colon\M(\Sigma_g)\to\operatorname{Sp}(2g,\Z)$ be the symplectic
representation and let $\tau_g$ be Meyer's signature cocycle. Kuno and
Sato define $\phi_g^V(\varphi)=\sigma(M_\varphi)$ for $\varphi\in\M(V_g)$,
where the mapping torus
\[
 M_\varphi=([0,1]\times V_g)/\bigl((0,p)\sim(1,\varphi(p))\bigr)
\]
has the product orientation, as in \cite[Section~3.1]{KunoSato2020}.
Here $\varphi$ also denotes a representative diffeomorphism. For a compact
oriented 4--manifold $M$, possibly with boundary, the signature is the number
of positive eigenvalues minus the number of negative eigenvalues of its
intersection form on $H_2(M;\Q)$; zero eigenvalues are ignored. In the above
mapping torus, following the interval from $0$ to $1$ takes $[0,p]$ to
$[1,p]=[0,\varphi^{-1}(p)]$, so its monodromy is $\varphi^{-1}$.
Kuno and Sato prove
\begin{equation}\label{eq:coboundary}
 \tau_g\bigl(\rho(\varphi),\rho(\psi)\bigr)
 =\phi_g^V(\varphi)+\phi_g^V(\psi)-\phi_g^V(\varphi\psi)
\end{equation}
for $\varphi,\psi\in\M(V_g)$ \cite[Lemma~4.2]{KunoSato2020}.

For a meridian $c$, the homological formula of Kuno and Sato gives
\begin{equation}\label{eq:V-values}
 \phi_g^V(\DT{c})=
 \begin{cases}
  1,& c\text{ is nonseparating},\\
  0,& c\text{ is separating}.
 \end{cases}
\end{equation}
For a standard nonseparating meridian, the homological action has one
nontrivial block $\left(\begin{smallmatrix}1&1\\0&1\end{smallmatrix}\right)$.
Their formula then gives a form with matrix $\operatorname{diag}(1,0,\ldots,0)$,
whose signature is one \cite[Lemma~4.4]{KunoSato2020}. All nonseparating
meridians are conjugate in $\M(V_g)$, so this proves the first case.
For a separating meridian the homological action is the identity, and the
same formula gives the zero form \cite[Theorem~1.1]{KunoSato2020}.

After a small perturbation, assume that the critical values are distinct.
Remove disjoint disk neighborhoods of them, leaving a surface bundle
$X^\circ\to B$. Since $B$ retracts to a graph, the handlebody monodromy
extends this bundle to a $V_g$--bundle $W\to B$, oriented by the base
followed by the fiber. The induced orientation on each boundary circle
of $B$ is clockwise around the corresponding critical value, so its
monodromy is $\DT{c_i}^{-1}$. With the mapping-torus convention above,
the part of $\bdry W$ lying over $\bdry B$ is the disjoint union of the
$M_{\DT{c_i}}$, and the remaining boundary is $X^\circ$.
After rounding corners, $\bdry W$ has signature zero, so Novikov
additivity gives
\begin{equation}\label{eq:smooth-complement-signature}
 \sigma(X^\circ)=-\sum_{i=1}^{\ell}\phi_g^V(\DT{c_i}).
\end{equation}
This is the boundary argument of \cite[Lemma~4.2]{KunoSato2020}, applied
to the punctured base.

Let $n$ and $s$ be the numbers of nonseparating and separating vanishing
cycles. Equation~\eqref{eq:V-values} gives $\sigma(X^\circ)=-n$.
A singular-fiber neighborhood has signature zero for a nonseparating
vanishing cycle and $-1$ for a separating one \cite{Ozbagci2002}.
Another application of Novikov additivity gives
\[
 \sigma(X)=-n-s=-\ell,
\]
which proves \eqref{eq:signature-main}.

\begin{remark}\label{rem:lagrangian}
The formula $\sigma(X)=-\ell$ also holds if the monodromy preserves a
Lagrangian subspace $L\subset H_1(\Sigma_g;\Q)$.  Let $v=[c]\neq0$ be the
class of a nonseparating vanishing cycle.  Since the transvection $T_v$
preserves $L$, for every $u\in L$ we have
\[
 T_v(u)=u\mathbin{\pm}\langle u,v\rangle v\in L.
\]
If $v\notin L$, this forces $\langle u,v\rangle=0$ for every $u\in L$, so
$v\in L^\perp=L$, a contradiction.  Thus every nonzero vanishing class lies
in $L$; separating vanishing cycles have zero homology class.  Choose an
integral symplectic basis whose first $g$ vectors span $L$ over $\Q$.  The
homological monodromy then lies in the upper block triangular subgroup, on
which Kuno--Sato's formula cobounds $\tau_g$
\cite[Remark~3.8]{KunoSato2020}.  The same homological calculation as above
gives value $1$ on a nonseparating vanishing twist and $0$ on a separating
one.  Applying the coboundary identity to a pants decomposition gives
$\sigma(X^\circ)=-n$, and the singular-fiber neighborhoods contribute $-s$.
For $\ell=0$, the coboundary evaluates to zero on the closed base.
\end{remark}

\section{Hyperelliptic handlebody monodromy}

Assume $g\geq1$, as in the definition of hyperelliptic handlebody monodromy.

Endo's hyperelliptic Meyer function
\[
 \phi_g^H\colon H(\Sigma_g)\longrightarrow\Q
\]
satisfies \eqref{eq:coboundary} with $\phi_g^H$ in place of $\phi_g^V$ on
$H(\Sigma_g)$.  Subtracting the two coboundary identities shows that
\begin{equation}\label{eq:lambda}
 \lambda_g:=\phi_g^H-\phi_g^V\colon\mathcal H(V_g)\longrightarrow\Q
\end{equation}
is a homomorphism on $\mathcal H(V_g)$ \cite{KunoSato2020}.

Let $c$ be a meridian with $\DT{c}\in\mathcal H(V_g)$.  The identity
$\iota\DT{c}\iota^{-1}=\DT{\iota(c)}=\DT{c}$ implies that $c$ is invariant
under $\iota$ up to isotopy.  Endo's function has value
$(g+1)/(2g+1)$ on a nonseparating twist and $-4k(g-k)/(2g+1)$ on a
separating twist of type $k$, where the complementary genera are $k$ and
$g-k$ \cite{Endo2000}; see also \cite[Section~4.1]{Kuno2016}.
Subtracting \eqref{eq:V-values} gives
\begin{equation}\label{eq:lambda-values}
 \lambda_g(\DT{c})=
 \begin{cases}
  -\dfrac{g}{2g+1},&c\text{ is nonseparating},\\[7pt]
  -\dfrac{4k(g-k)}{2g+1},
    &c\text{ separates }\Sigma_g\text{ into genera }k\text{ and }g-k.
 \end{cases}
\end{equation}
In particular, every positive meridian twist in $\mathcal H(V_g)$ has strictly
negative image under $\lambda_g$.

\begin{proof}[Proof of Corollary~\ref{cor:hyperelliptic}]
Let $f$ have hyperelliptic handlebody monodromy.  Since $\lambda_g$ is a
homomorphism to an abelian group, it vanishes on commutators.  If $\ell>0$,
applying it to the monodromy factorization of $f$ gives
\begin{align*}
 0=\lambda_g(1)
 &=\lambda_g\bigl(\DT{c_1}\cdots\DT{c_\ell}
 [A_1,B_1]\cdots[A_h,B_h]\bigr)\\
 &=\sum_{i=1}^\ell\lambda_g(\DT{c_i})<0,
\end{align*}
a contradiction.  Hence $\ell=0$, so $f$ is a surface bundle.
\end{proof}

For $g=1,2$, the hyperelliptic involution is central in $\M(\Sigma_g)$,
so $\mathcal H(V_g)=\M(V_g)$.  Thus every Lefschetz fibration with
handlebody monodromy in these genera is a surface bundle.

\section{Proof of the main theorem}

\begin{proof}[Proof of Theorem~\ref{thm:main}]
It remains to prove the existence statement.  Suppose $\ell>0$.  By the
preceding section, $g\geq3$.  If the base is $S^2$, then
\eqref{eq:signature-main} gives $\sigma(X)=-\ell$.  Li rules out
this equality in \cite[Proposition~4.1]{Li2000}: it would
force $X$ to be a blow-up of an $S^2$--bundle over $\Sigma_g$, contradicting
his genus bound for Lefschetz fibrations on ruled surfaces.

If the base is $T^2$, equip $X$ with a symplectic form compatible with the
fibration.  We have
\[
 e(X)=\ell,\qquad \sigma(X)=-\ell,\qquad
 c_1^2(X)=2e(X)+3\sigma(X)=-\ell<0.
\]
Relative minimality and the positive genus of the base imply that $X$ is
minimal \cite[Corollary~3.3]{Li2000}.  Li's symplectic Parshin--Arakelov
inequality then gives $c_1^2(X)\geq0$ \cite[Theorem~3.4]{Li2000}, a
contradiction.

We now construct examples for every $g\geq3$ and $h\geq2$.  Consider the
lantern embedding in $\bdry V_g$ shown in Figure~\ref{fig:lantern}(a).  The
neck curves $a,b,c,d$ bound disjoint disks $D_a,D_b,D_c,D_d$ separating
$V_g$ into a lower 3--ball and an upper genus-$(g-3)$ handlebody.  The
lantern is the four-holed sphere on the boundary of the lower ball, so
$x,y,z$ also bound disks in this ball.  All seven curves are therefore
meridians.  With right-handed Dehn twists and products written in functional
order, the lantern relation is
\begin{equation}\label{eq:lantern-handlebody}
 \DT{a}\DT{b}\DT{c}\DT{d}=\DT{x}\DT{y}\DT{z}
 \qquad\text{in }\M(V_g).
\end{equation}
This is the handlebody version of the lantern construction in
\cite{KorkmazOzbagci2001}.

Let $D_x,D_y,D_z$ be disks bounded by $x,y,z$ in the lower ball.  Cutting
along either pair $(D_b,D_x)$ or $(D_y,D_c)$ leaves a connected handlebody
of genus $g-2$: both pieces of the cut-open lower ball remain attached to
the upper handlebody.  The ordered pairs are therefore equivalent under
$\M(V_g)$, as one sees by completing them to disk systems and matching the
paired disk faces on the resulting balls.  Also, $D_d$ and $D_z$ are
nonseparating.  Hence there are
$\varphi,\psi\in\M(V_g)$ such that
\[
 \varphi(D_b)=D_y,\qquad \varphi(D_x)=D_c,
 \qquad\text{and}\qquad \psi(D_z)=D_d.
\]
Since $[\DT{x}\DT{b}^{-1},\varphi]=\DT{x}\DT{b}^{-1}\DT{y}\DT{c}^{-1}$
and $[\DT{z},\psi]=\DT{z}\DT{d}^{-1}$, the lantern relation gives
$\DT{a}=[\DT{x}\DT{b}^{-1},\varphi][\DT{z},\psi]$, or equivalently
\begin{equation}\label{eq:one-node-factorization}
 \DT{a}[\psi,\DT{z}][\varphi,\DT{x}\DT{b}^{-1}]=1
 \qquad\text{in }\M(V_g).
\end{equation}
This is a monodromy factorization for a genus-$g$ Lefschetz fibration over
$\Sigma_2$ with $\ell=1$.  Adding $h-2$ trivial commutators gives a fibration
over every $\Sigma_h$, $h\geq2$.  The vanishing cycle $a$ is nonseparating,
so the fibration is relatively minimal.  This completes the proof.
\end{proof}

\begin{figure}[tbp]
\centering
\begin{minipage}[t]{.485\textwidth}
\centering
\includegraphics[width=\linewidth]{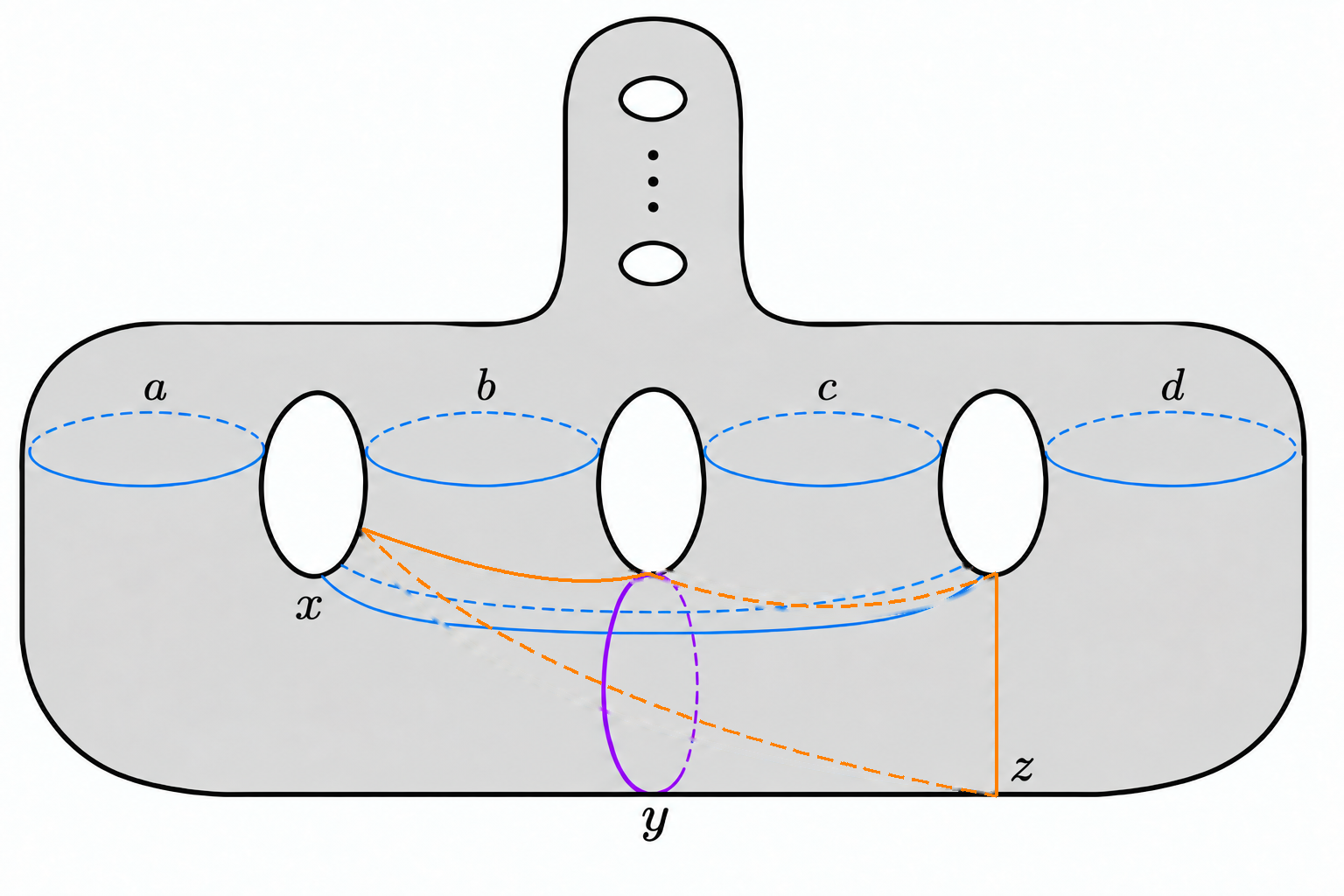}
\par\smallskip
{\small (a)}
\end{minipage}\hfill
\begin{minipage}[t]{.485\textwidth}
\centering
\includegraphics[width=\linewidth]{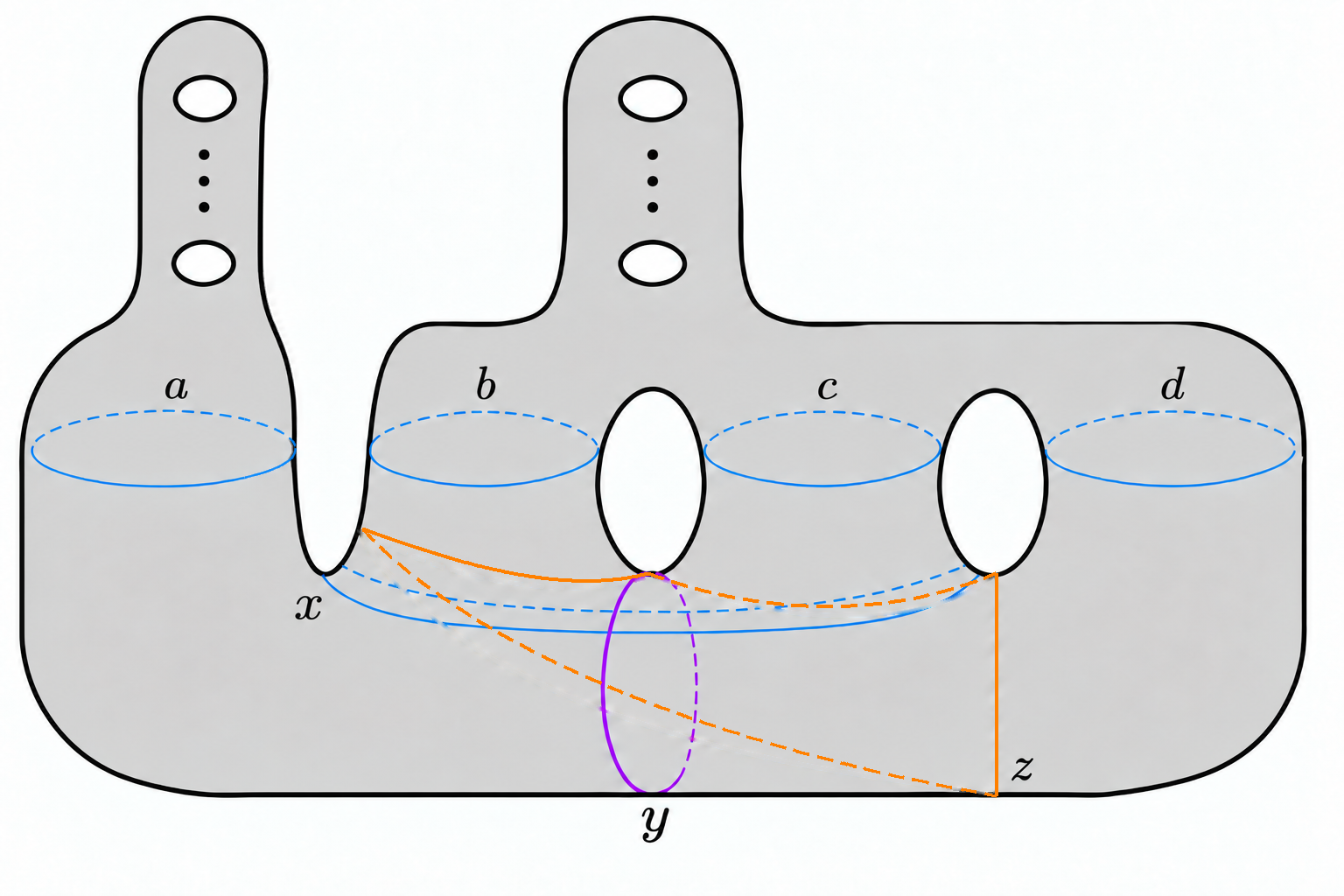}
\par\smallskip
{\small (b)}
\end{minipage}
\caption{Lantern embeddings in $\bdry V_g$.
(a) The curve $a$ is nonseparating, and the upper part represents $g-3$
additional handles.
(b) The curve $a$ separates genera $k$ and $g-k$; the upper parts on the
left and right represent $k$ and $g-k-2$ handles, respectively.
An upper part with zero handles is omitted. All seven curves are meridians
in both cases.}
\label{fig:lantern}
\end{figure}

\FloatBarrier
\begin{proof}[Proof of Corollary~\ref{cor:nonholomorphic}]
By Theorem~\ref{thm:main}, $g\geq3$ and $h\geq2$. Since
$e(X)=4(g-1)(h-1)+\ell$, the signature formula gives
\[
 c_1^2(X)=2e(X)+3\sigma(X)=8(g-1)(h-1)-\ell.
\]
If $f$ were holomorphic, the Parshin--Arakelov inequality would give
$c_1^2(X)\geq8(g-1)(h-1)$ \cite[Section~1]{Li2000}, a contradiction.
Every summand in a fiber sum decomposition of $f$ inherits handlebody
monodromy. Thus any holomorphic summand would have to be a surface bundle,
and so would their fiber sum.
\end{proof}

\begin{remark}\label{rem:critical-points}
For fixed $g\geq3$ and $h\geq2$, the calculation in the preceding proof,
together with Li's inequality $c_1^2(X)\geq2(g-1)(h-1)$
\cite[Theorem~3.4]{Li2000}, gives
\begin{equation}\label{eq:critical-point-bound}
 \ell\leq6(g-1)(h-1).
\end{equation}

At least, any integer $1\leq\ell\leq h-1$ can be realized. Pull back the
one-critical-point genus-$g$ fibration over $\Sigma_2$ along a connected
unbranched cover $\Sigma_{\ell+1}\to\Sigma_2$ of degree $\ell$. The pullback
has exactly $\ell$ critical points and remains relatively minimal with
handlebody monodromy. Adding $h-\ell-1$ trivial commutators to its monodromy
factorization gives the desired base genus.
\end{remark}

\begin{proof}[Proof of Corollary~\ref{cor:commutator-length}]
For a nonseparating meridian, relation~\eqref{eq:one-node-factorization}
gives the upper bound two, since all nonseparating meridians are conjugate
under $\M(V_g)$.

Now let the meridian be separating, with complementary genera $k$ and
$g-k$, where $1\leq k\leq\lfloor g/2\rfloor$.  In
Figure~\ref{fig:lantern}(b), put $k$ handles above $a$ and
$g-k-2\geq0$ above the other part.  The curve $a$ then has the required
separating type.  The lower 3--ball is attached through $D_a$ to a genus-$k$
handlebody and through $D_b,D_c,D_d$ to a genus-$(g-k-2)$ handlebody.  The interior lantern curves again bound disks in the lower ball, so all seven curves are meridians.

In the lantern, $x$ separates $\{b,c\}$ from $\{a,d\}$, $y$ separates
$\{a,b\}$ from $\{c,d\}$, and $z$ separates $\{a,c\}$ from $\{b,d\}$.  After cutting along $D_b,D_x$, the two pieces
of the lower ball are joined through the $c$- and $d$-necks to the same
upper handlebody; after cutting along $D_y,D_c$, they are joined through
the $b$- and $d$-necks.  The genus-$k$ piece remains attached through $a$.
Thus both complements are connected handlebodies of genus $g-2$.  Also,
cutting along $D_d$ or $D_z$ leaves a connected complement through the
$b$- and $c$-necks. As in the proof of Theorem~\ref{thm:main}, the
disk-system argument gives $\varphi,\psi$ with the prescribed images, and hence
\[
 \DT{a}[\psi,\DT{z}][\varphi,\DT{x}\DT{b}^{-1}]=1
 \qquad\text{in }\M(V_g).
\]
Every separating meridian of type $k$ is conjugate to $a$ in $\M(V_g)$,
by identifying the complementary handlebodies.  Its twist therefore has
commutator length at most two.

Finally, a meridian twist cannot be a single commutator: a relation
$\DT{c}[A,B]=1$ would give a Lefschetz fibration over $T^2$ with handlebody
monodromy and one critical point, contrary to Theorem~\ref{thm:main}.
Since a meridian twist is not the identity, its commutator length is two.
\end{proof}

\begin{remark}\label{rem:positive-length}
Let $\phi\in\M(V_g)$, $g\geq3$, admit a nonempty positive factorization
into meridian twists. Its commutator length $r$ is finite by
Corollary~\ref{cor:commutator-length}. Closing any such factorization using
$r$ commutators for $\phi^{-1}$ gives a Lefschetz fibration over $\Sigma_r$,
where $r\geq2$ by Theorem~\ref{thm:main}. The factorization length therefore
satisfies \eqref{eq:critical-point-bound} with $h=r$.

For a fixed open book, or more generally, a spinal open book with pages of genus $g\geq3$, the Stein
fillings arising from allowable Lefschetz fibrations whose capped
fibrations are relatively minimal and have handlebody monodromy have
bounded Euler characteristics; see \cite{BaykurVanHornMorris2015} for the
compatible Stein structures. For a capped fibration $\widehat X\to B$,
the boundary signature formula and $|\phi_g^V|\leq g$
\cite[Theorem~1.1]{KunoSato2020} give a uniform bound on
$\sigma(\widehat X)+\ell$. Attaching a fixed surface-bundle cap and
applying Li's inequality then bounds $\ell$, and hence the Euler
characteristic.
\end{remark}

\FloatBarrier
\bibliographystyle{amsplain}
\bibliography{refs}

\end{document}